\documentclass[11pt]{article}

\usepackage{amsmath}
\usepackage{amssymb}
\usepackage{amsthm}
\usepackage{comment}
\usepackage{enumitem}
\usepackage{ifthen}
\usepackage{mathrsfs}
\usepackage{mathabx}
\usepackage{mathtools}\usepackage[USenglish]{babel}
\usepackage{wrapfig}
\usepackage{graphicx}
\usepackage[margin=1.03in]{geometry}
\usepackage{subcaption}
\usepackage[table]{xcolor}
\usepackage{makecell}
\usepackage{enumitem}
\usepackage{bm}
\usepackage{url}
\usepackage{float}
\usepackage{dsfont}
\usepackage{color} 
\usepackage{csquotes}

\makeatletter
\let\cite\relax
\DeclareRobustCommand{\cite}{%
  \let\new@cite@pre\@gobble
  \@ifnextchar[\new@cite{\@citex[]}}
\def\new@cite[#1]{\@ifnextchar[{\new@citea{#1}}{\@citex[#1]}}
\def\new@citea#1{\def\new@cite@pre{#1}\@citex}
\def\@cite#1#2{[{\new@cite@pre\space#1\if\relax\detokenize{#2}\relax\else, #2\fi}]}
\makeatother

\usepackage{titlesec}
\titlespacing*{\section}{0pt}{6pt}{2pt}
\titlespacing*{\subsection}{0pt}{6pt}{6pt}
\titlespacing*{\subsubsection}{0pt}{6pt}{0pt}
\titlespacing*{\figure}{0pt}{6pt}{0pt}
\titlespacing*{\table}{0pt}{6pt}{0pt}

\newtheorem{theorem}{Theorem}[section]
\newtheorem{lemma}[theorem]{Lemma}
\newtheorem{corollary}{Corollary}[theorem]
\newtheorem{proposition}[theorem]{Proposition}
\newtheorem{definition}[theorem]{Definition}
\newtheorem{remark}[theorem]{Remark}
\numberwithin{equation}{section}

\bibliographystyle{plainurl}

\begin{document}

\title{A modified chemostat exhibiting competitive exclusion ``reversal"}

\author{\centerline{\scshape  Thomas Griffin$^{1}$, James Lathrop$^{2}$ and  Rana D. Parshad$^{1}$}}

\maketitle

\medskip
{\footnotesize

   \medskip
   
    \centerline{ 1) Department of Mathematics,}
 \centerline{Iowa State University,}
   \centerline{Ames, IA 50011, USA.}

    \medskip
   
    \centerline{ 2) Department of Computer Science,}
 \centerline{Iowa State University,}
   \centerline{Ames, IA 50011, USA.}

 }

\medskip

\begin{abstract}

The classical chemostat is an intensely investigated model in ecology and bio/chemical engineering, where n-species, say $x_{1}, x_{2}...x_{n}$ compete for a single growth limiting nutrient. Classical theory predicts that depending on model parameters, one species competitively excludes all others. Furthermore, this ``order" of strongest to weakest is preserved, $x_{1} >> x_{2} >> ...x_{n}$, for say $D_{1} < D_{2} <...D_{n}$, where $D_{i}$ is the net removal of species $x_{i}$.
Meaning $x_{1}$ is the strongest or most dominant species and $x_{n}$ is the weakest or least dominant. We propose a modified version of the classical chemostat, exhibiting certain counterintuitive dynamics. Herein we show that if only a certain proportion of the weakest species $x_{n}$'s population is \emph{removed} at a ``very" fast density dependent rate, it will in fact be able to competitively exclude all other species, for certain initial conditions.
Numerical simulations are carried out to visualize these dynamics in the three species case. 

\end{abstract}

    \maketitle
    \section{Introduction}
\label{sec:introduction}

Chemostat is an intriguing model that sits at the intersection of biology, chemistry, and engineering.
It is a controlled environment where microorganisms engage in a dynamic dance of growth, competition, and survival, all orchestrated by carefully calibrated inputs and outputs. 
The chemostat, short for "chemical environment station," is a powerful tool used in microbiology and bio-process engineering to study the interactions between microbial populations and their environment.
With its ability to mimic real-world ecosystems and sustain continuous cultures under precise conditions, the chemostat opens a window into understanding fundamental biological processes, optimizing industrial production of biofuels, pharmaceuticals, and other valuable compounds, and even shedding light on ecological dynamics \cite{calcott2020continuous}.

One of the most remarkable aspects of the chemostat is its ability to maintain a specific dilution rate, which refers to the rate at which fresh medium is introduced and old medium is removed. 
This rate determines the time microorganisms spend within the system and plays a crucial role in shaping their growth rates and interactions. 
By adjusting the dilution rate, researchers can simulate scenarios ranging from rapid population growth to resource limitation, enabling the study of various ecological and physiological responses \cite{ajbar2011dynamics} \cite{degermendzhi2023control}.

In particular the general chemostat ODE model is an expansion of the classical Lotka-Volterra competition model \cite{vet2018bistability}. Unlike Lotka-Volterra  models, chemostat models consider nutrient concentrations explicitly. As a result, they can encompass more intricate interactions, such as the interplay between mutualism and competition. In this context, we demonstrate the ability to simplify the relatively complex chemostat equations into a pair of extended Lotka-Volterra equations.

In this paper we will investigate a novel change in the classical chemostat model, first demonstrated with the two species case and then further extended to the n-species, and outline the dynamics of the new system. Also we will showcase many numerical results as well as explore the impact of changing the human controlled parameters of the species inside the system.

    \section{Background}
\label{sec:background}

Consider the classical Chemostat system, 

\begin{equation}
\frac{dx_{i}}{dt} = x_{i}(f_{i}(S)-D_{i}), \ \frac{dS}{dt} = D(S^{0} - S) - \sum^{n}_{i=1} \frac{1}{\gamma_{i}} f_{i}(S)x_{i}, \ i=1,2...,n.
\end{equation}

This system describes the competition of the $x_{i}$ species with a growth-limiting resource/nutrient $S$.
The $f_{i}$ are growth functions of the species that are assumed to be monotonically increasing.
The yield constants, denoted by $\gamma_{i}$, account for the fraction of the nutrient that the different species consume that is used to create new biomass.
$D$ is the resources dilution rate, $S^{0}$ is its concentration limit, and $D_{i} = D+k_{i}$ (where $k_{i}$ is typically a death rate) are removal rates for the competing species. 
If the parameters are kept constant, then at most one species survives as dictated by \emph{principle of competitive exclusion} \cite{hardin1960competitive} \cite{liu2014competitive}.
Since co-existence has been observed in many natural systems, mechanisms to achieve this result mathematically, have been thoroughly studied \cite{lotka1925elements} \cite{volterra1926fluctuations} \cite{armstrong1976coexistence}
\cite{sanders1995odum}.
To implement coexistence among the competing organisms within the chemostat a significant body of literature has focused on refining the chemostat model through the application of control theory.
This includes employing open-loop control, as demonstrated in \cite{harmand2017chemostat} \cite{smith1981competitive} and feedback control \cite{de2002feedback} \cite{gouze2005feedback} \cite{keeran2008feedback}.
Specifically, De Leenheer and Smith \cite{de2002feedback}, along with Keeran et al. \cite{keeran2008feedback}, regarded the dilution rate as a feedback parameter, linking it to the concentrations of the competing organisms. This approach is grounded in the understanding that the dilution rate functions as a lab-controlled variable, and the concentrations of organisms are measurable.
Reference \cite{de2002feedback} establishes that when the dilution rate is an affine function dependent on the concentrations of two competing organisms, coexistence can be attained as a globally asymptotically stable positive equilibrium.
Furthermore, \cite{keeran2008feedback} illustrates that within a chemostat featuring two competing organisms and governed by a novel form of feedback control, the presence of an asymptotically stable positive limit cycle is possible.
In this paper, we will investigate a density dependent removal rate that when applied can lead to instances where the weaker/weakest competitor could win out and competitively exclude all the other competitors depending on initial conditions. 
    \section{Two species extraction model}

We introduce the following two species chemostat extraction model, where the weaker species is harvested at a density  dependent rate. Intuitively one would suspect that if we harvested the weakest species that the extinction would of that species would still happen, perhaps at an even faster rate, however as we will demonstrate under certain conditions we can actually see the weaker species outlast its stronger competitors. Similar work has been done when non-smooth density dependent rates have been considered in both predator-prey and competitive scenarios \cite{antwi2020dynamics, parshad2021some}. The current model takes the form,

\begin{align}
\frac{dx_{1}}{dt}&= x_{1}(f_{1}(S)-D_{1}),\\
\frac{dx_{2}}{dt}&= x_{2}(f_{2}(S)-qD_{2})-(1-q)kx_{2}^{p}, \text{ and,}  \\
\frac{dS}{dt}&= D(S^{0}-S) - \sum^{2}_{i=1} \frac{1}{\gamma_{i}} f_{i}(S)x_{i}
\end{align}
where $q$ is the percentage of the species that are removed at a density dependent rate $k(x_{2}) = k x_{2}^{p-1}$.
We will consider functional responses of the form,
\begin{equation}
f_{i}(S)=\frac{m_{i}S}{S+a_{i}}. 
\end{equation}

\begin{lemma}
    
The equilibrium point $E^{x_2}=(0,x_2^{*},S^{*})$ is a local sink equilibrium when,

$$(f_2-qD_2)(1-p)-\frac{f_2}{\gamma_{2}}  + \frac{a_2 m_2}{\gamma_2 (S^{*}+a_2)^{2}}  x_{2}^{*} < D$$ 
and
$$D < (\frac{a_2 m_2}{\gamma_2 (S^{*}+a_2)^{2}}  x_{2}^{*}) + (\frac{f_2}{\gamma_{2}}(f_2-qD_2)(1-p))(x_2^* \,{\left(\frac{m_2}{S^*+a_2}-\frac{S^*\,m_2 }{{{\left(S^*+a_2 \right)}}^2 }\right)})$$
\end{lemma}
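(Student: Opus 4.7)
The plan is to verify local asymptotic stability of $E^{x_2} = (0, x_2^*, S^*)$ by linearization: I will compute the Jacobian of the vector field at $E^{x_2}$, show that its eigenvalues have negative real parts under the stated inequalities, and invoke the Hartman--Grobman theorem.

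First I will write out the $3\times 3$ Jacobian $J(x_1,x_2,S)$ of the system and evaluate it at $(0, x_2^*, S^*)$. Because the first component of the vector field carries an explicit factor of $x_1$, the entries $J_{12}$ and $J_{13}$ vanish at $E^{x_2}$, so the first row reads $(f_1(S^*)-D_1,\,0,\,0)$ and the characteristic polynomial factors as
\begin{equation*}
\bigl(f_1(S^*) - D_1 - \lambda\bigr)\,\det(\tilde{J} - \lambda I_2) = 0,
\end{equation*}
yielding one eigenvalue $\lambda_1 = f_1(S^*) - D_1$ (assumed negative by the standard ``invader cannot establish'' condition from classical chemostat theory) and a $2\times 2$ reduced block $\tilde{J}$ in the $(x_2, S)$-directions. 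Before analyzing $\tilde{J}$ I will use the equilibrium identity $x_2^*(f_2(S^*) - qD_2) = (1-q)k(x_2^*)^p$, equivalently $(1-q)k(x_2^*)^{p-1} = f_2(S^*) - qD_2$, to simplify the $(2,2)$ entry from $f_2(S^*) - qD_2 - p(1-q)k(x_2^*)^{p-1}$ down to $(1-p)(f_2(S^*)-qD_2)$. This is the key algebraic collapse that produces the $(1-p)(f_2-qD_2)$ factors appearing throughout the statement.

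Next I will apply the Routh--Hurwitz criterion to $\tilde{J}$, which for a $2\times 2$ matrix reduces to $\operatorname{tr}\tilde{J} < 0$ together with $\det\tilde{J} > 0$. The trace inequality becomes a lower bound on $D$, while the determinant inequality, after sign bookkeeping on the mixed cross-terms, becomes an upper bound on $D$; together these pin $D$ in an interval, matching the two-sided constraint displayed in the lemma. At this step I will substitute the Monod derivative $f_2'(S^*) = \frac{a_2 m_2}{(S^*+a_2)^2} = \frac{m_2}{S^*+a_2} - \frac{S^* m_2}{(S^*+a_2)^2}$ in both expressions, and, where useful, invoke the steady-state balance $D(S^0-S^*) = f_2(S^*)x_2^*/\gamma_2$ coming from $\dot{S} = 0$ to rewrite terms involving $f_2/\gamma_2$.

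The main obstacle I anticipate is purely algebraic bookkeeping: the determinant of $\tilde{J}$ expands into several cross-terms of mixed sign, and reaching the precise form of the second displayed inequality requires careful manipulation of these terms together with the Monod-derivative identities above. No dynamical-systems subtlety should arise beyond this step --- the conclusion that $E^{x_2}$ is a local sink follows from Hartman--Grobman once both Routh--Hurwitz inequalities hold and $f_1(S^*) < D_1$ is assumed, so the proof is fundamentally a linear-algebra verification on a single explicit $2\times 2$ matrix.
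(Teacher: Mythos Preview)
Your proposal is correct and follows essentially the same route as the paper: linearize at $E^{x_2}$, factor off the eigenvalue $\lambda_1=f_1(S^*)-D_1$ from the decoupled $x_1$-row, simplify the $(2,2)$ entry via the equilibrium relation to $(1-p)(f_2-qD_2)$, and then impose trace/determinant (Routh--Hurwitz) conditions on the residual $2\times 2$ block in $(x_2,S)$. Your write-up is in fact more explicit than the paper's about why the simplification of $J_{22}$ occurs and about invoking Hartman--Grobman at the end; the only extraneous ingredient is the substrate steady-state balance $D(S^0-S^*)=f_2 x_2^*/\gamma_2$, which the paper never uses and which you will not need either.
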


\begin{proof}

We will begin by looking at the Jacobian with the equilibrium plugged in and then looking at the eigenvalues of the system.

\begin{equation*}
J(E^{x_2})=\begin{bmatrix}
f_1 - D_{1} & 0 & 0 \\

0 & (f_2-qD_2)(1-p) & x_2^* \,{\left(\frac{m_2}{S^*+a_2}-\frac{S^*\,m_2}{{{\left(S^*+a_2 \right)}}^2 }\right)}\\

-\frac{f_1 }{\gamma_{1}}   & -\frac{f_2 }{\gamma_{2}}  & -D + \frac{a_2 m_2}{\gamma_2 (S^{*}+a_2)^{2}}  x_{2}^{*}
\end{bmatrix}
\end{equation*}

Characteristic polynomial:

$$\rho(\lambda) = (f_1-D_1-\lambda) \det \left( \begin{bmatrix}

(f_2-qD_2)(1-p) - \lambda & x_2^* \,{\left(\frac{m_2}{S^*+a_2}-\frac{S^*\,m_2}{{{\left(S^*+a_2 \right)}}^2 }\right)}\\
-\frac{f_2}{\gamma_{2}} & -D + \frac{a_2 m_2}{\gamma_2 (S^{*}+a_2)^{2}}  x_{2}^{*} - \lambda
\end{bmatrix}
\right)$$

Looking at the eigenvalues for this system, the first is $\lambda=f_1-D_1$, which by construction must be less than or equal to zero. 
In the case where the eigenvalue is positive, then the differentials of the $x_1$ equation would be positive and have growth thus $E^{x_{2}}$ would not be an equilibrium point.

The remaining two eigenvalues are determined from the bottom right 2x2 matrix, and to achieve the condition of local sink, we need both eigenvalues to be negative.

When dealing with the final 2x2 matrix we can determine the signs of the eigenvalues using the trace and determinant. 
The determinant is product of the eigenvalues and when positive either means that both eigenvalues are positive or both are negative.
The trace is the sum of the eigenvalues, if the trace is negative coupled with the positive determinant implies that both eigenvalues are negative as desired.

This trace condition holds when:

$$ (f_2-qD_2)(1-p)-\frac{f_2}{\gamma_{2}} -D + \frac{a_2 m_2}{\gamma_2 (S^{*}+a_2)^{2}}  x_{2}^{*}<0$$

$$ (f_2-qD_2)(1-p)-\frac{f_2}{\gamma_{2}}  + \frac{a_2 m_2}{\gamma_2 (S^{*}+a_2)^{2}}  x_{2}^{*}<D$$

The determinant condition holds when:

$$(f_2-qD_2)(1-p)(-D + \frac{a_2 m_2}{\gamma_2 (S^{*}+a_2)^{2}}  x_{2}^{*}) - (-\frac{f_2}{\gamma_{2}})(x_2^* \,{\left(\frac{m_2}{S^*+a_2}-\frac{S^*\,m_2 }{{{\left(S^*+a_2 \right)}}^2 }\right)}) > 0$$

$$(\frac{a_2 m_2}{\gamma_2 (S^{*}+a_2 )^{2}}  x_{2}^{*}) + (\frac{f_2}{\gamma_{2}}(f_2-qD_2)(1-p))(x_2^* \,{\left(\frac{m_2 }{S^*+a_2 }-\frac{S^*\,m_2 }{{{\left(S^*+a_2 \right)}}^2 }\right)}) > D$$
  
\end{proof}

\section{Reduction of the Chemostat model to Lotka-Volterra}

Setting S as some limiting $S^{*}$ we get the following reduction of the 2 species extraction chemostat to this Lotka-Volterra competition model.

\begin{align}
\frac{dx_{1}}{dt}&= x_{1}((D(S^{0}-S^{*})-f_{1}(S^{*})x_{1}-f_{2}(S^{*})x_{2})-D_{1}),\\
\frac{dx_{2}}{dt}&= x_{2}((D(S^{0}-S^{*})-f_{1}(S^{*})x_{1}-f_{2}(S^{*})x_{2})-qD_{2})-(1-q)kx_{2}^{p}
\end{align}

Our goal here is to reduce the chemostat model to a model of only two equations with reduced complexity, allowing for a qualitative dynamical analysis, while retaining a physical interpretation of each term in the model. 
This model reduction consists of the following critical steps:

1.	Elimination of the nutrient variable S, which is possible since linear relations between S and $x_{i}$ exist as t goes to infinity.

2.	Simplification of the growth rates using a Taylor approximation when $a_{i} >>$  S. It is always possible to find a parameter set for which the system has the same steady states that obeys this condition. 

3.	Elimination of higher order terms that are not critical for the qualitative dynamics.

For full derivation please see \cite{vet2018bistability}.

After completing these steps, we are left with the following system.

\begin{align}
\label{eq:slv}
\frac{dx_{1}}{dt}&= x_{1}(a-b_{1}x_{1}-c_{1}x_{2}-D_{1}),\\
\frac{dx_{2}}{dt}&= x_{2}(a-c_{2}x_{1}-b_{2}x_{2}-qD_{2})-(1-q)kx_{2}^{p}
\end{align}

\begin{figure}[H]
\begin{subfigure}[b]{.475\linewidth}
    \includegraphics[width=3.0in]{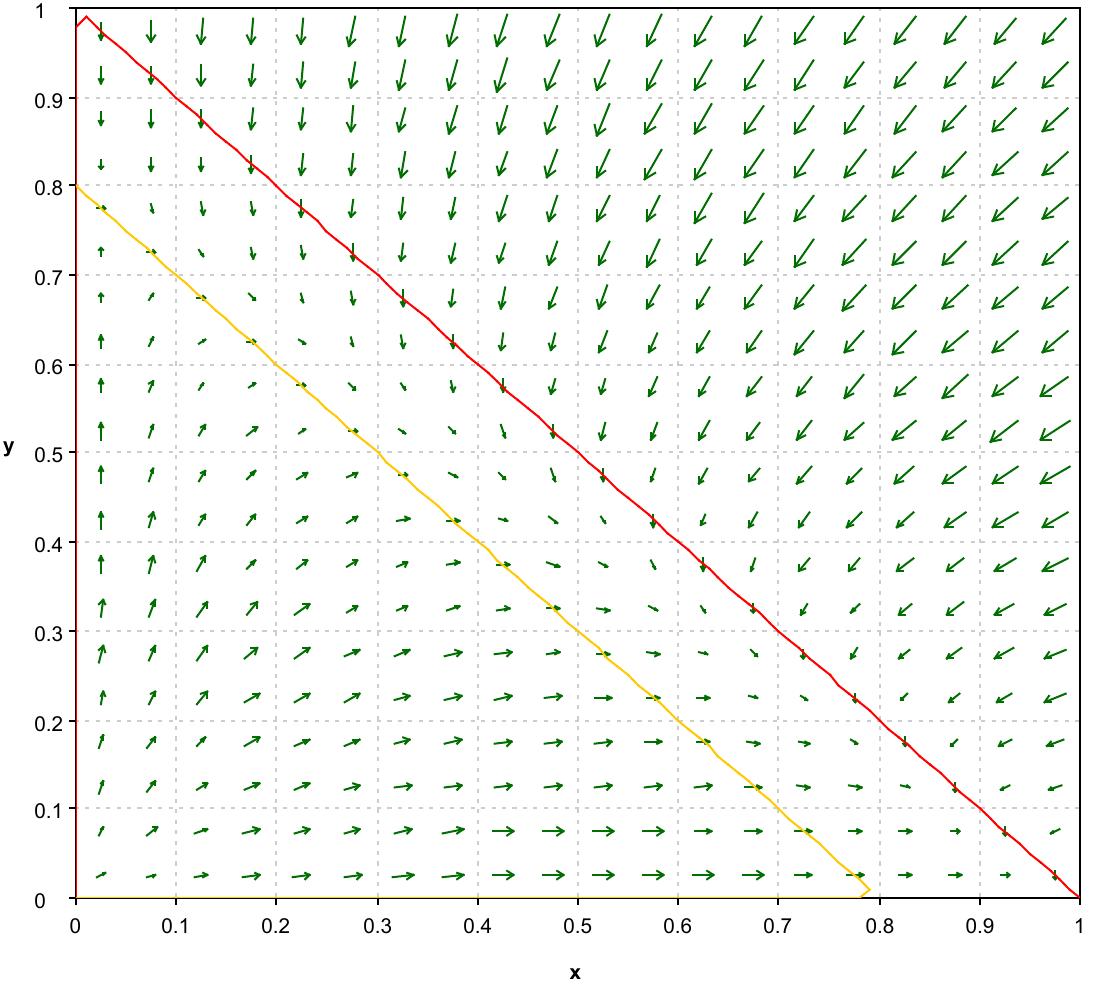}
    \caption{$q=1$}
\end{subfigure}
\hfill
\begin{subfigure}[b]{.475\linewidth}
    \includegraphics[width=3.0in]{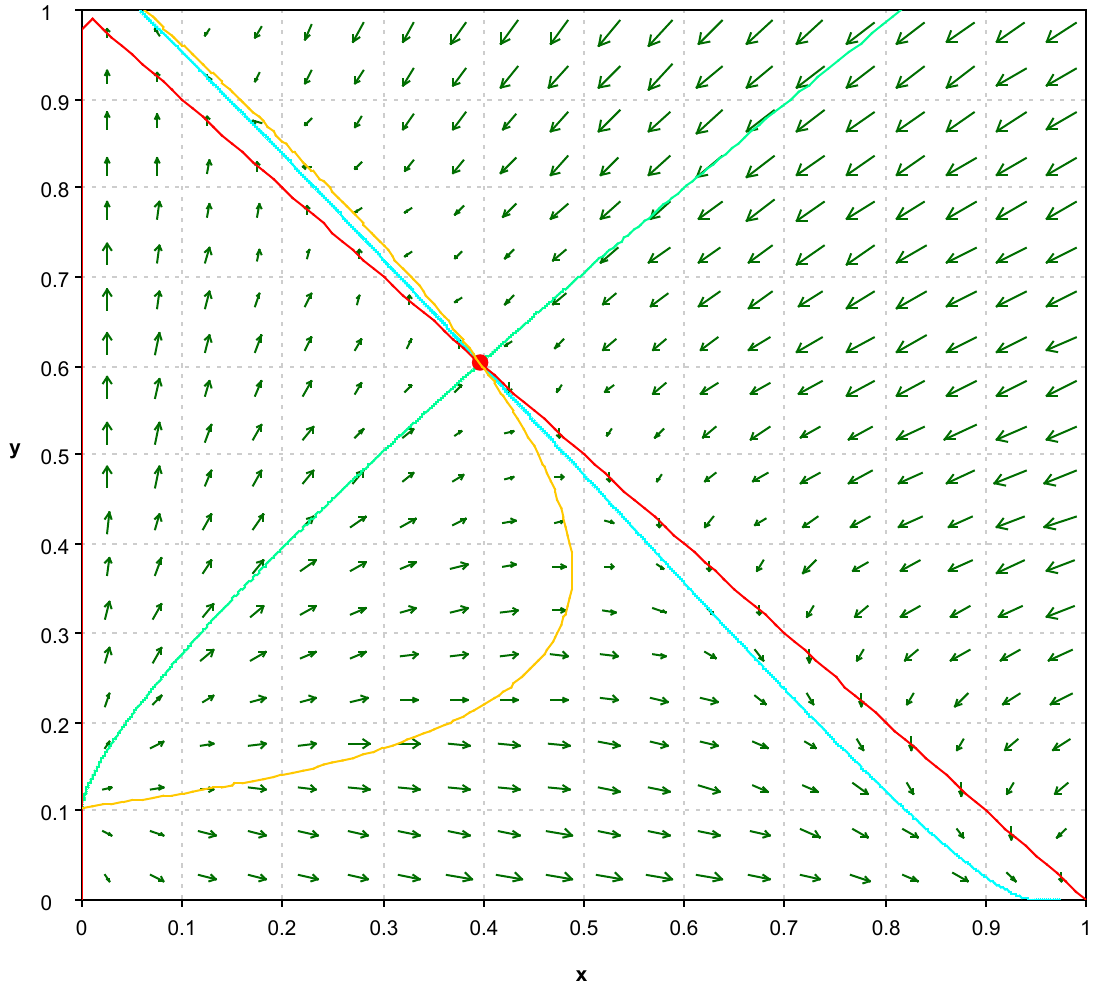}
    \caption{$q=.8$}
\end{subfigure}
\caption{For a set of parameter values we can see that when $q=1$ (no percentage of the weaker $x_{2}$ species is affected by the removal term) that species $x_{1}$ competitively excludes $x_{2}$ for all positive initial conditions. 
As q is decreased and the percentage of the $x_{2}$ extracted increases. 
From this a seperatrix of the initial conditions is created leading to different dynamics depending on the starting location.
In (b) an initial condition above and to the left of the green seperatrix line will converge to only the weaker species surviving, while initial conditions below and to the right will converge to the stronger species excluding all others.}
\end{figure}

We first recap the following classical result  which guarantees non-negativity of solutions \cite{pierre2010global, perko2013differential}.

\begin{lemma}
\label{lem:l1}
Consider the following general ODE system,
\begin{eqnarray}
\label{C1} \dot{f} &=&  F(f,m),\\
\label{C2} \dot{m} &=&  G(f,m),\\
\end{eqnarray}
where $F, G$ are Lipschitz continuous in the state variables, $f,m$. Then the non-negativity of solutions is preserved in time, that is,
\begin{equation*}
f_{0},m_{0} \geq 0~~ \Rightarrow~~ \forall t \in [0, T_{max}),~ f(t),m(t) \geq 0,
\end{equation*}
if and only if
\begin{equation*}
\forall f(t),m(t) \geq 0 => F(0,m),~ G(f,0), \geq 0.
\end{equation*}

\end{lemma}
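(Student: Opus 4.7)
The plan is to prove the two implications of the biconditional separately, handling the necessary condition via a direct contrapositive construction and the sufficient condition via a perturbation argument.

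For the necessity direction, I would argue by contrapositive. Suppose the boundary sign condition fails; without loss of generality assume there exists some $\tilde{m} \geq 0$ with $F(0, \tilde{m}) < 0$. Take the initial data $f_0 = 0$, $m_0 = \tilde{m}$, which lies in the non-negative quadrant. Picard–Lindelöf (applicable since $F, G$ are Lipschitz) produces a unique local solution. Because $\dot{f}(0) = F(0, \tilde{m}) < 0$ and $\dot{f}$ is continuous, $f(t) < 0$ on some interval $(0, \delta)$, which contradicts preservation of non-negativity. The case $G(\tilde{f}, 0) < 0$ is symmetric.

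For the sufficiency direction, I would introduce the perturbed system $\dot{f}_\epsilon = F(f_\epsilon, m_\epsilon) + \epsilon$ and $\dot{m}_\epsilon = G(f_\epsilon, m_\epsilon) + \epsilon$ with the same initial data $f_0, m_0 \geq 0$, where $\epsilon > 0$ is small. Let $T_\epsilon$ be the first time at which either $f_\epsilon$ or $m_\epsilon$ hits $0$. If $f_\epsilon(T_\epsilon) = 0$, then by minimality $m_\epsilon(T_\epsilon) \geq 0$, and so $\dot{f}_\epsilon(T_\epsilon) = F(0, m_\epsilon(T_\epsilon)) + \epsilon \geq \epsilon > 0$, which contradicts $T_\epsilon$ being a first-exit time. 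The symmetric argument rules out $m_\epsilon(T_\epsilon) = 0$. Hence the perturbed trajectory stays in the open positive quadrant on its interval of existence. Lipschitz continuity of $F, G$ yields continuous dependence of solutions on the right-hand side, so $(f_\epsilon, m_\epsilon) \to (f, m)$ uniformly on compact subintervals as $\epsilon \to 0^{+}$, giving $f(t), m(t) \geq 0$ for all $t \in [0, T_{\max})$.

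The main obstacle is the degenerate case in the sufficiency half, where $F(0, m) = 0$ or $G(f, 0) = 0$ on the boundary: the vector field is only tangent to the coordinate axis and a naive first-exit argument on the original system cannot rule out tangential sliding into the negative region. The $\epsilon$-perturbation replaces the weak inward inequality by a strict one and cleanly handles this degeneracy. An alternative would be to invoke Nagumo's invariance theorem directly, whose tangent-cone condition specializes on the axis-aligned boundary of the non-negative quadrant to exactly the stated sign conditions on $F(0, \cdot)$ and $G(\cdot, 0)$; citing \cite{pierre2010global} should suffice to close the argument without reproducing this classical machinery in detail.
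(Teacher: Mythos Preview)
The paper does not actually supply a proof of this lemma: it is introduced as a ``classical result'' and simply cited from \cite{pierre2010global, perko2013differential}, so there is no in-paper argument to compare against. Your proposal is therefore strictly more detailed than what the paper offers, and the route you take (contrapositive for necessity, $\epsilon$-perturbation plus continuous dependence for sufficiency) is the standard one underlying the references the paper cites; your closing remark that one could instead invoke Nagumo's theorem via \cite{pierre2010global} is in fact exactly the paper's approach.

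One small point worth tightening in your sufficiency argument: your definition of $T_\epsilon$ as ``the first time at which either $f_\epsilon$ or $m_\epsilon$ hits $0$'' is awkward when the initial data already lie on an axis (e.g.\ $f_0=0$), since then $T_\epsilon=0$ trivially. It is cleaner to define $T_\epsilon$ as the infimum of times at which the perturbed trajectory leaves the closed quadrant, or to first note that at $t=0$ the strict inward push $\epsilon>0$ immediately moves the trajectory into the open quadrant and then run the first-exit argument from a slightly later time. This is a cosmetic fix; the substance of your argument is correct.
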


\begin{lemma}
    Consider the competition system \eqref{eq:slv}. Solutions to the system remain non-negative, for initial data $(x_{1}(0),x_{2}(0))$ that is positive.
\end{lemma}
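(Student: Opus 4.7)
The plan is to invoke the non-negativity criterion stated in Lemma \ref{lem:l1} directly, with
\[
F(x_1,x_2) = x_1\bigl(a - b_1 x_1 - c_1 x_2 - D_1\bigr), \qquad
G(x_1,x_2) = x_2\bigl(a - c_2 x_1 - b_2 x_2 - qD_2\bigr) - (1-q)k\,x_2^{p}.
\]
Thus the proof reduces to two checks: a regularity check ($F,G$ are Lipschitz in the state variables on the relevant domain) and the sign check ($F(0,x_2) \ge 0$ and $G(x_1,0) \ge 0$ for all $x_1,x_2 \ge 0$).

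The sign check is immediate: $F(0,x_2) = 0$ since $x_1$ factors out of the first equation, and $G(x_1,0) = -(1-q)k\cdot 0^{p} = 0$ as long as $p>0$. Both are $\ge 0$, which is exactly the condition required by Lemma \ref{lem:l1}. One can also verify this geometrically by noting that on the $x_2$-axis the vector field has $\dot x_1 = 0$, and on the $x_1$-axis it has $\dot x_2 = 0$, so the coordinate axes are invariant sets; a continuity argument then keeps trajectories starting in the open first quadrant from crossing either axis in finite time.

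The subtle step is the Lipschitz hypothesis. All terms in $F$ and in $G$ are polynomials in $(x_1,x_2)$ except for $(1-q)k\,x_2^{p}$. When $p \ge 1$ this term is locally Lipschitz on $\{x_2 \ge 0\}$ and Lemma \ref{lem:l1} applies verbatim. When $0<p<1$, which corresponds to the ``very fast'' removal regime highlighted in the introduction, the map $x_2 \mapsto x_2^{p}$ is only H\"older continuous at the origin; this is where I expect the main technical obstacle. I would handle it by an approximation argument: replace $x_2^{p}$ with $(x_2 + \varepsilon)^{p} - \varepsilon^{p}$, which is smooth and Lipschitz on $\{x_2 \ge 0\}$, still vanishes at $x_2=0$, and for which Lemma \ref{lem:l1} yields non-negativity of the approximating solutions $x_i^{\varepsilon}(t)$. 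Passing $\varepsilon \to 0^{+}$ and using continuous dependence on the right-hand side (or a standard comparison argument) then transfers non-negativity to the limit solution.

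Finally, to upgrade non-negativity to strict positivity for positive initial data (which is what the statement requires), I would factor each equation as $\dot x_i = x_i H_i(x_1,x_2)$, where $H_2 = a - c_2 x_1 - b_2 x_2 - qD_2 - (1-q)k\,x_2^{p-1}$, and write
\[
x_i(t) = x_i(0)\exp\!\Bigl(\textstyle\int_0^t H_i(x_1(s),x_2(s))\,ds\Bigr).
\]
Since $x_i(0)>0$, this expression is strictly positive whenever the integral is finite on $[0,t]$; and even in the degenerate case $p<1$ where $H_2$ is unbounded as $x_2 \to 0^{+}$, the integrand only drives $x_2$ toward $0$ from above, never through it. This completes the argument.
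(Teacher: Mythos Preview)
Your approach is essentially the same as the paper's: both invoke Lemma~\ref{lem:l1} and verify the quasi-positivity condition $F(0,x_2)\ge 0$, $G(x_1,0)\ge 0$. The paper's proof is in fact a single sentence (``the proof follows via quasi-positivity of the system via application of Lemma~\ref{lem:l1}''), with the failure of Lipschitz continuity for $0<p<1$ acknowledged only in a subsequent remark and dismissed on the grounds that Lipschitz is needed for uniqueness rather than non-negativity. Your $\varepsilon$-regularization argument and the exponential representation for strict positivity go well beyond what the paper supplies, so your write-up is more careful than the original on exactly the point you flagged as subtle.
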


\begin{proof}
    The proof follows via quasi-positivity of the system via application of lemma \ref{lem:l1}
\end{proof}

\begin{remark}
Note, the application of lemma \ref{lem:l1}, requires Lipschitz continuity of the R.H.S of the ODE in question. However, the R.H.S of \eqref{eq:slv} is not Lipschitz continuous. This suffices here as the Lipschitz continuity is required for uniqueness of solutions, which we are not concerned with in this setting. The quasi-positivity can be still used to obtain non-negativity of solutions.
\end{remark}

We introduce certain requisite definitions,

Given a system of ODE, depending on the non-linearity in the equations,
one might not expect a solution to always exist globally in time. In
particular, solutions of some ODE may blow up in finite time \cite{quittner2019superlinear}. Recall,

\begin{definition}
(Finite time blow-up for ODE) We say that a solution $u(t)$, of a given ODE, with
suitable initial conditions, blows up at a finite time if%
\begin{equation*}
\underset{t \rightarrow T^{\ast } < \infty}{\lim }\left\vert u\left( t\right)
\right\vert =+\infty ,
\end{equation*}%
where $T^{\ast } < \infty$ is the blow-up time.
\end{definition}

\begin{definition}
(Finite time extinction for ODE) We say that a solution $v(t)$, of a given ODE, with
suitable initial conditions, goes extinct at a finite time if%
\begin{equation*}
\underset{t \rightarrow T^{\ast } < \infty}{\lim }\left\vert v\left( t\right)
\right\vert =0 ,
\end{equation*}%
where $T^{\ast } < \infty$ is the extinction time.
\end{definition}

\bigskip We need the following alternative (See \cite{pierre2010global}).

\begin{proposition}
\label{prop.exis.blow.2.2} The system \eqref{eq:slv}
admits a unique local in time, classical solution $\left( x_{1}, x_{2} \right) $
on an interval $[0,T_{\max }]$, and either\newline
(i) \textit{The solution} \textit{is bounded on} $[0,T_{\max })$ \textit{and
it is global ( i.e.} $T_{\max }=+\infty $).\newline
\textit{(ii) Or }%
\begin{equation}
\underset{t\nearrow T_{\max }}{\lim }\max \hspace{.02in} \left\vert x_{1}(t)\right\vert
+\left\vert x_{2}(t)\right\vert  =+\infty ,
\label{2.D.C.R.2.3}
\end{equation}%
\textit{in this case, the solution is not global, and we say that it blows up in finite
time} $T_{\max }$, \textit{or it ceases to exist}, where $T_{\max } < \infty$
denotes the eventual blowing-up time.
\end{proposition}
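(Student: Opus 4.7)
The plan is to combine the Picard–Lindelöf theorem with a standard maximal continuation argument, using the non-negativity lemma established just above to confine trajectories. Let $F(x_{1},x_{2})$ denote the right-hand side of \eqref{eq:slv}. On the open half-plane $\{x_{2}>0\}$ each component of $F$ is $C^{1}$, hence locally Lipschitz, so for strictly positive initial data Picard–Lindelöf supplies a unique classical solution on some interval $[0,\tau]$. I would then define $T_{\max}$ as the supremum of times $T$ for which a classical solution exists on $[0,T]$; the non-negativity lemma keeps the orbit in the closed first quadrant.

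For the dichotomy, I would argue by contradiction. Suppose $T_{\max}<\infty$ and, aiming to rule out case (ii), assume $\limsup_{t\nearrow T_{\max}}\bigl(|x_{1}(t)|+|x_{2}(t)|\bigr)<\infty$. Then the orbit is confined to a compact set $K$ of the closed first quadrant; since $F$ is continuous, it is bounded on $K$, so $\dot{x}_{1},\dot{x}_{2}$ are bounded on $[0,T_{\max})$. Consequently $(x_{1},x_{2})$ is uniformly continuous on $[0,T_{\max})$ and extends continuously to some point $(x_{1}^{*},x_{2}^{*})\in K$. A second application of Picard–Lindelöf at $(T_{\max},x_{1}^{*},x_{2}^{*})$ then produces a classical solution on $[T_{\max},T_{\max}+\delta)$ which, glued to the original orbit, strictly extends the solution past $T_{\max}$ and contradicts the definition of $T_{\max}$. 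Hence either $T_{\max}=+\infty$, giving alternative (i), or condition \eqref{2.D.C.R.2.3} must hold, giving alternative (ii).

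The main obstacle, flagged in the remark preceding the proposition, is that $F$ fails to be Lipschitz along the axis $\{x_{2}=0\}$ when $p\in(0,1)$, so the second application of Picard–Lindelöf above is not immediately justified if the continuation limit has $x_{2}^{*}=0$. I would handle this by working strictly in the open positive quadrant: for strictly positive initial data, continuity of $x_{2}$ at $t=0$ keeps $x_{2}(t)>0$ on a maximal sub-interval of $[0,T_{\max})$, and on that sub-interval the continuation step goes through verbatim because $F$ is locally Lipschitz there. If $x_{2}$ reaches zero at some finite time $T^{*}\le T_{\max}$ (finite-time extinction), then beyond $T^{*}$ the system collapses to the scalar, locally Lipschitz equation for $x_{1}$, and both existence and the same blow-up alternative follow trivially. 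In either case the proposition is established.
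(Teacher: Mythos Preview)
The paper does not supply its own proof of this proposition; it is stated as a standard blow-up alternative with a citation to \cite{pierre2010global}, and no argument is given in the text. Your proposal therefore goes well beyond what the paper does: you sketch the classical Picard--Lindel\"of plus maximal-continuation proof, and you take care to address the non-Lipschitz behaviour of $x_{2}^{p}$ along $\{x_{2}=0\}$, which the paper only flags in a remark without resolving. Your treatment---restricting to the open positive quadrant where $F$ is $C^{1}$, and then noting that once $x_{2}$ reaches zero the system reduces to a scalar, locally Lipschitz equation for $x_{1}$---is the correct way to salvage both existence and the dichotomy. One small point worth tightening: when $x_{2}^{*}=0$ you should also verify forward uniqueness of the continued solution, since Picard--Lindel\"of does not directly apply there; this follows because for $p\in(0,1)$ the dominant term $-(1-q)k\,x_{2}^{p}$ forces any non-negative solution through $x_{2}=0$ to remain identically zero thereafter.
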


\begin{theorem}
\label{thm:t11}
    Consider the competition system \eqref{eq:slv}. There exists initial data $(x_{1}(0),x_{2}(0))$ such that initiating from this data $x_{2}$ goes extinct in finite time, and $x_{1} \rightarrow \left(\frac{a-D_{1}}{b_{1}}\right)$ as $t \rightarrow \infty$.
\end{theorem}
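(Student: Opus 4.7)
The mechanism driving extinction is the sub-linear term $-(1-q)k x_{2}^{p}$ with $p<1$, which is well known to force finite-time extinction in the scalar ODE $\dot v = -\beta v^{p}$. My plan is to choose initial data for which this sub-linear term dominates the $x_{2}$-kinetics, reduce the problem to a scalar differential inequality for $x_{2}$, and then analyse the residual logistic equation for $x_{1}$ after $x_{2}$ has died out.

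Using the non-negativity of solutions established in the preceding lemma, I would drop the manifestly non-positive terms $-c_{2} x_{1} x_{2}$ and $-b_{2} x_{2}^{2}$ from the $x_{2}$ equation to obtain the pointwise estimate
\[
\dot x_{2} \;\le\; (a-qD_{2})_{+}\,x_{2} - (1-q)k\,x_{2}^{p},
\]
with $(a-qD_{2})_{+} := \max(a-qD_{2},0)$. I would then pick $x_{2}(0)$ small enough that
\[
x_{2}(0)^{\,1-p} \;\le\; \frac{(1-q)k}{2\,(a-qD_{2})_{+}}
\]
(no bound needed if $a \le qD_{2}$) and $x_{1}(0)>0$ arbitrary. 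On $\{0 \le x_{2} \le x_{2}(0)\}$ the estimate refines to $\dot x_{2} \le -\tfrac{1}{2}(1-q)k\, x_{2}^{p} \le 0$, so this region is forward invariant. A one-sided comparison with the separable majorant $\dot v = -\tfrac{1}{2}(1-q)k\, v^{p}$, $v(0)=x_{2}(0)$, then yields
\[
x_{2}(t)^{\,1-p} \;\le\; x_{2}(0)^{\,1-p} - \tfrac{1}{2}(1-q)k(1-p)\,t,
\]
forcing $x_{2}$ to reach $0$ no later than $T^{\ast} = 2\, x_{2}(0)^{\,1-p}/[(1-q)k(1-p)] < \infty$.

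For the long-time behaviour of $x_{1}$, I would note that on the compact interval $[0,T^{\ast}]$ both $x_{1}$ and $x_{2}$ are a priori bounded (the estimate $\dot x_{1}\le x_{1}(a-D_{1})$ yields the upper bound on $x_{1}$), so $\dot x_{1}/x_{1} = a - b_{1}x_{1} - c_{1}x_{2} - D_{1}$ is bounded below and $x_{1}(T^{\ast})>0$. Continuing the solution past $T^{\ast}$ by the canonical choice $x_{2}\equiv 0$, the system collapses to the scalar logistic $\dot x_{1} = x_{1}(a - D_{1} - b_{1}x_{1})$, whose positive equilibrium $(a-D_{1})/b_{1}$ (under the standing assumption $a>D_{1}$ implicit in the statement) is globally asymptotically stable on $(0,\infty)$, delivering the claimed limit.

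The main obstacle I expect is the lack of Lipschitz continuity of the $x_{2}$ equation at $x_{2}=0$, which invalidates the textbook comparison principle in the form that requires uniqueness of the majorant. Since only a one-sided bound $x_{2}\le v$ is needed and both quantities are non-negative, a weaker comparison principle suffices; in fact the very non-Lipschitz behaviour is exactly what enables the finite-time extinction in the first place, so this difficulty is intrinsic rather than a flaw in the approach. I would also have to justify the continuation $x_{2}\equiv 0$ after $T^{\ast}$, which is immediate since the zero function solves the $x_{2}$-equation; concatenating it with the logistic trajectory for $x_{1}$ produces the global orbit realising both conclusions of the theorem.
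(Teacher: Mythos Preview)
Your argument is correct and rests on the same mechanism as the paper's: for sufficiently small $x_{2}(0)$ the sub-linear sink $-(1-q)k\,x_{2}^{p}$ dominates the linear growth, and after $x_{2}$ vanishes the $x_{1}$-equation is a scalar logistic.  The route, however, is different.  The paper makes the substitution $u=1/x_{2}$, derives
\[
\dot u \;=\; c_{2}x_{1}u + b_{2} - (a-qD_{2})u + (1-q)k\,u^{2-p}
\;\ge\; (1-q)k\,u^{2-p} - (a-qD_{2})u,
\]
and invokes finite-time \emph{blow-up} of $u$ for large $u(0)$ (super-linear exponent $2-p\in(1,2)$), which is exactly the dual of your finite-time \emph{extinction} of $x_{2}$ for small $x_{2}(0)$.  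Your direct differential-inequality argument (essentially $\frac{d}{dt}x_{2}^{1-p}\le -\tfrac{1}{2}(1-q)k(1-p)$ on the invariant region) avoids the change of variable and the comparison with the auxiliary blow-up ODE, so it is a bit more elementary; it also makes the extinction time explicit.  You are additionally more careful than the paper on two points it glosses over: verifying $x_{1}(T^{\ast})>0$ before passing to the logistic limit, and acknowledging that the non-Lipschitz right-hand side at $x_{2}=0$ forces one to \emph{select} the continuation $x_{2}\equiv 0$ rather than appeal to uniqueness.
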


\begin{proof}
   We follow methods in \cite{parshad2020remark, antwi2020dynamics} and begin by making a change of variable, $x_{2}=\frac{1}{u}$. Now,
   \begin{equation}
   \frac{dx_{2}}{dt} = \frac{-1}{u^{2}}\frac{du}{dt}
   \end{equation}
   Inserting the $u$ variable into the $x_{2}$ equation yields,

 \begin{equation}
   \frac{-1}{u^{2}}\frac{du}{dt} = \left(\frac{1}{u} \right)
   \left( a-c_{2}x_{1}-b_{2}\frac{1}{u}-qD_{2} -(1-q)k\left(\frac{1}{u} \right)^{p}\right)
   \end{equation}

Multiplying both sides by $-u^{2}$ and recasting the complete system in terms of the new variables we obtain,

\begin{align}
\label{eq:slv1}
\frac{dx_{1}}{dt}&= x_{1}(a-b_{1}x_{1}-c_{1}x_{2}-D_{1}),\\
\frac{du}{dt}&= c_{2}x_{1}u + b_{2} - (a-qD_{2})u + (1-q)ku^{2-p}
\end{align}
   Note,

\begin{equation}
  \frac{du}{dt}= c_{2}x_{1}u + b_{2} - (a-qD_{2})u + (1-q)ku^{2-p} > (1-q)ku^{2-p} - (a-qD_{2})u
   \end{equation}
Thus using positivity, and the fact that $0<p<1$, so $1<2-p<2$,
we see that $u$ will blow up in finite time, for sufficiently large positive initial data, in comparison with the ODE,
$\dot{x}=C_{1}x^{l}-C_{2}x, C_{1},C_{2}>0, 1<l<2$.
Thus, $\lim_{t \rightarrow T^{*} < \infty} u(t) \rightarrow + \infty$. And so,  

\begin{equation}
\lim_{t \rightarrow T^{*} < \infty} x_{2}(t) = \lim_{t \rightarrow T^{*} < \infty} \left(\frac{1}{u(t)} \right) = \frac{1}{(\lim_{t \rightarrow T^{*} < \infty} u(t))}\rightarrow \frac{1}{+ \infty} = 0
\end{equation}

Once we have $\lim_{t \rightarrow T^{*} < \infty} x_{2}(t) =0$, plugging that into the equation for $x_{1}$ we have,

\begin{equation}
  \frac{dx_{1}}{dt}= x_{1}(a-b_{1}x_{1}-D_{1})
   \end{equation}

   Thus via standard theory, $x_{1} \rightarrow \left(\frac{a-D_{1}}{b_{1}}\right)$ as $t \rightarrow \infty$. This proves the Theorem.

\end{proof}
   
\begin{remark}
We see in the proof of Theorem \ref{thm:t11} that we need sufficiently large initial data for $u$ to blow-up in finite time. Since 
$x_{2} = \frac{1}{u}$, large initial data for $u$ is tantamount to small initial data for $x_{2}$ to go extinct in finite time.
\end{remark}

    \section{Chemostat Extraction Model}

In this section we will introduce the Chemostat Extraction Model (CEM) defined by the following equations,
\begin{align}
\frac{dx_{i}}{dt}&= x_{i}(f_{i}(S)-D_{i}), \label{eq:chemostat2-1} \\
\frac{dx_{n}}{dt}&= x_{n}(f_{n}(S)-qD_{n})-(1-q)kx_{n}^{p}, \text{ and,}  \label{eq:chemostat2-2} \\
\frac{dS}{dt}&= D(S^{0}-S) - \sum^{n}_{i=1} \frac{1}{\gamma_{i}} f_{i}(S)x_{i}i=1,2...,n-1, \label{eq:chemostat2-3}
\end{align}
where $q$ is the percentage of the species that are removed by the density dependent removal controlled by rate k and power p.

We will consider the $n$ species case, with similar functional responses,
\begin{equation}
f_{i}(S)=\frac{m_{i}S}{S+a_{i}}. \label{eq:functional_response}
\end{equation}
We first study the dynamics of the interior equilibria, and
examine the stability of an interior equilibrium for the system defined by Equations~\ref{eq:chemostat2-1}, \ref{eq:chemostat2-2}, and \ref{eq:chemostat2-3}.

\subsection{Stability Analysis}

To fully understand the dynamics of this system we will conduct analysis of equilibrium solutions. 
This will show us the criteria needed to allow convergence to the desired equilibrium. 
We will begin by showing that there is no stable equilibrium where more than one of the competing species can survive. 

\begin{theorem}
An interior equilibrium $E^{*}=(x_1^{*},x_2^{*},...,S^{*})$ of the system defined by Equations~\ref{eq:chemostat2-1}, \ref{eq:chemostat2-2}, and \ref{eq:chemostat2-3} is a saddle equilibrium.
\end{theorem}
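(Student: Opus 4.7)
The plan is to compute the Jacobian $J(E^*)$ of the right-hand side of~\eqref{eq:chemostat2-1}--\eqref{eq:chemostat2-3} at the interior equilibrium $E^*$ and to exhibit a strictly positive real eigenvalue, showing that $E^*$ possesses an unstable direction. At the interior equilibrium, $x_i^*>0$ together with~\eqref{eq:chemostat2-1} forces $f_i(S^*) = D_i$ for $i = 1, \dots, n-1$, while~\eqref{eq:chemostat2-2} yields $f_n(S^*) - qD_n = (1-q)k(x_n^*)^{p-1}$. These identities make $\partial \dot x_i / \partial x_i|_{E^*}$ vanish for $i<n$ and reduce the $x_n$-diagonal entry to
\[
A \;:=\; \partial \dot x_n/\partial x_n\bigl|_{E^*} = (1-q)(1-p)k\,(x_n^*)^{p-1},
\]
which is \emph{strictly positive} -- unlike in the classical chemostat, where it would vanish -- thanks to the density-dependent extraction term $(1-q)kx_n^p$ with $0<p<1$, $0<q<1$. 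The crucial structural feature is that the first $n-1$ rows of $J(E^*)$ vanish in columns $1,\dots,n$ (off-diagonal entries because $\dot x_i$ depends only on $x_i$ and $S$; diagonal entries by the equilibrium condition) and carry only the positive entry $V_i := x_i^* f_i'(S^*)$ in the $S$-column.

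I would then compute $\det(J(E^*)-\lambda I)$ by a Schur-complement argument applied to the invertible block $-\lambda I_{n-1}$ (valid for $\lambda \neq 0$, and extended to $\lambda=0$ by continuity since both sides are polynomials in $\lambda$). With $U_i := -f_i(S^*)/\gamma_i$, $\alpha_i := -U_i V_i = f_i(S^*) f_i'(S^*) x_i^* / \gamma_i > 0$, and $B := -D - \sum_{i=1}^n f_i'(S^*) x_i^*/\gamma_i < 0$, only the $(2,2)$ entry of the resulting $2\times 2$ Schur complement picks up the correction $-\lambda^{-1}\sum_{i=1}^{n-1}\alpha_i$. A short expansion then yields the polynomial identity
\[
\det\bigl(J(E^*)-\lambda I\bigr) = (-1)^{n-1}\lambda^{n-2}\,g(\lambda),\qquad g(\lambda) := \lambda\bigl[(A-\lambda)(B-\lambda)+\alpha_n\bigr] - (A-\lambda)\sum_{i=1}^{n-1}\alpha_i.
\]
Thus the spectrum of $J(E^*)$ consists of $0$ with multiplicity $n-2$ together with the three roots of the cubic $g$.

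Finally, I would observe that $g(0) = -A\sum_{i=1}^{n-1}\alpha_i < 0$ while the leading coefficient of $g$ equals $+1$, so $g(\lambda)\to +\infty$ as $\lambda\to+\infty$. The intermediate value theorem then produces $\lambda^* > 0$ with $g(\lambda^*) = 0$, i.e., a strictly positive eigenvalue of $J(E^*)$; the resulting unstable direction is what is meant by $E^*$ being a saddle.

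The main obstacle I anticipate is the block-matrix bookkeeping leading to the clean factorization $(-1)^{n-1}\lambda^{n-2}g(\lambda)$; once it is in hand, the intermediate value argument is immediate and the positivity of $A$ -- entirely attributable to the density-dependent extraction $(1-q)kx_n^p$ with $p<1$ -- is what forces the positive eigenvalue. A secondary subtlety is that for $n\geq 3$ the zero eigenvalue of multiplicity $n-2$ makes $E^*$ non-hyperbolic, reflecting the non-generic coincidence $f_i(S^*)=D_i$ for several $i$; the word ``saddle'' is therefore best read here as ``possessing an unstable direction'', which the positive root of $g$ supplies.
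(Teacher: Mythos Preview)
Your argument is correct and is in fact more rigorous than the paper's own proof. The paper simply writes down the Jacobian at $E^*$, simplifies the first $n-1$ diagonal entries to zero using the nullclines $f_i(S^*)=D_i$, asserts that ``the system has $\lambda=0$ as $n-1$ of the eigenvalues,'' and concludes directly from this that $E^*$ is an unstable saddle. No positive eigenvalue is ever exhibited there, and the multiplicity claim is actually off by one: your Schur-complement factorization shows the algebraic multiplicity of $0$ is $n-2$, since $g(0)=-A\sum_{i<n}\alpha_i<0$ prevents $\lambda=0$ from being a root of the residual cubic.

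What your approach buys is precisely the missing piece. By tracking the explicit cubic factor $g(\lambda)$ and applying the intermediate value theorem (using $g(0)<0$ together with the leading coefficient $+1$), you locate a strictly positive real eigenvalue, which is what is actually needed to justify calling $E^*$ a saddle. You also correctly identify the positivity of $A=(1-q)(1-p)k(x_n^*)^{p-1}$, coming from the sublinear extraction $p<1$, as the mechanism forcing $g(0)<0$; this is exactly where the modified model departs from the classical chemostat, in which the analogous diagonal entry would vanish. Your closing remark about non-hyperbolicity for $n\geq 3$ is appropriate and aligns with the paper's own post-proof remark about the ``degenerate saddle case.''
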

\begin{proof}
We first examine the nullclines of the system of equations given by 
\begin{align}
\label{chemo_nullclines}
 &f_{i}(S)-D_{i}=0, i=1,2,...,n-1, \\ 
 &f_{n}(S)-qD_{n}-(1-q)k (x_{n})^{p-1}=0, \text{ and} \\
 &D(S^{0} - S) - \sum^{n}_{i=1}f_{i}(S)x_{i}=0.
\end{align}

Solving the system of equations, without loss of generality let $E^{*}(x_{1}^{*},x_{2}^{*},...,S^{*})$ be a positive interior equilibria of system.
We study the $n+1$ by $n+1$ Jacobian of the point which is given by

\begin{equation*}
J(E^{*})=\begin{bmatrix}
f_1 - D_{1} & 0 & \dots  & 0 & x_1^* \,{\left(\frac{m_1 }{S^*+a_1 }-\frac{S^*\,m_1 }{{{\left(S^*+a_1 \right)}}^2 }\right)} \\

0 & f_2-D_2 & \ddots & \vdots & x_2^* \,{\left(\frac{m_2 }{S^*+a_2 }-\frac{S^*\,m_2 }{{{\left(S^*+a_2 \right)}}^2 }\right)}\\

\vdots & \vdots & \ddots & \vdots & \vdots\\

0 & 0 & \dots & f_n-qD_n - (1-q)kp(x_{n}^{*})^{p-1} & x_n^* \,{\left(\frac{m_n }{S^*+a_n }-\frac{S^*\,m_n }{{{\left(S^*+a_n \right)}}^2 }\right)}\\

-\frac{f_1 }{\gamma_{1}}   & -\frac{f_2 }{\gamma_{2}}  & \dots & -\frac{f_n}{\gamma_{n}} & -D - \sum^{n}_{i=1} \frac{a_i m_i}{(S^{*}+a_i)^{2}}  x_{i}^{*}
\end{bmatrix}
\end{equation*}

Simplifying with the aforementioned nullclines,

\begin{equation*}
J(E^{*})=\begin{bmatrix}
0 & 0 & \dots  & 0 & x_1^* \,{\left(\frac{m_1 }{S^*+a_1 }-\frac{S^*\,m_1 }{{{\left(S^*+a_1 \right)}}^2 }\right)} \\

0 & 0 & \ddots & \vdots & x_2^* \,{\left(\frac{m_2 }{S^*+a_2 }-\frac{S^*\,m_2 }{{{\left(S^*+a_2 \right)}}^2 }\right)}\\

\vdots & \vdots & \ddots & \vdots & \vdots\\

0 & 0 & \dots &  (1-p)(1-q)k(x_{n}^{*})^{p-1} & x_n^* \,{\left(\frac{m_n }{S^*+a_n }-\frac{S^*\,m_n }{{{\left(S^*+a_n \right)}}^2 }\right)}\\

-\frac{f_1 }{\gamma_{1}}   & -\frac{f_2 }{\gamma_{2}}  & \dots & -\frac{f_n}{\gamma_{n}} & -D - \sum^{n}_{i=1} \frac{a_i m_i}{(S^{*}+a_i)^{2}}  x_{i}^{*}
\end{bmatrix}
\end{equation*}

It is clear that the system has $\lambda=0$ as $n-1$ of the eigenvalues. 
Thus, when an interior equilibrium exists it is always an unstable saddle.
\end{proof}

\begin{remark}
Note due to the eigenvalues being zero, one has to deal with a degenerate saddle case. Also, the interior equilibrium may not be positive. In the event it is negative, due to its saddle nature we still have via standard theory the existence of a stable manifold, a $C^{1}$ hyper surface that separates the phase space, into different regions of attraction depending on initial conditions, \cite{perko2013differential}. 
    \end{remark}

\begin{lemma}
If $p=1$ then the system will be in competitive exclusion and the equilibrium point $E^{x_1}=(x_1^{*},0,...,0,S^{*})$ will be a sink if species $x_{1}$ is assumed the strongest competitor.
\end{lemma}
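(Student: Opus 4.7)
The plan is to exploit the fact that when $p=1$ the density-dependent removal term in \eqref{eq:chemostat2-2} collapses to a linear term: writing $(1-q)k x_{n}^{p}=(1-q)k\,x_{n}$ and factoring $x_{n}$, the $x_{n}$ equation becomes
$$\frac{dx_{n}}{dt} = x_{n}\bigl(f_{n}(S)-\widetilde{D}_{n}\bigr),\qquad \widetilde{D}_{n}:=qD_{n}+(1-q)k.$$
Thus system \eqref{eq:chemostat2-1}--\eqref{eq:chemostat2-3} reduces to the \emph{classical} $n$-species chemostat in which species $n$ simply has an effective removal rate $\widetilde{D}_{n}$. The Hsu--Hubbell--Waltman competitive exclusion principle alluded to in the background then applies verbatim, and the winner is the species with the smallest break-even nutrient level.

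First I would define the break-even concentrations by $f_{i}(\lambda_{i})=D_{i}$ for $i=1,\dots,n-1$ and $f_{n}(\lambda_{n})=\widetilde{D}_{n}$. The assumption that $x_{1}$ is the strongest competitor means $\lambda_{1}<\lambda_{i}$ for every $i\neq 1$; setting $S^{*}=\lambda_{1}$ and $x_{1}^{*}=\gamma_{1}(S^{0}-S^{*})$ yields the boundary equilibrium $E^{x_{1}}$, and automatically
$$f_{1}(S^{*})-D_{1}=0,\quad f_{i}(S^{*})-D_{i}<0\ \ (2\le i\le n-1),\quad f_{n}(S^{*})-\widetilde{D}_{n}<0.$$
Next I compute the $(n{+}1)\times(n{+}1)$ Jacobian at $E^{x_{1}}$ as in the earlier proof. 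Because $x_{i}^{*}=0$ for every $i\ge 2$, the $S$-column entry $x_{i}^{*}f_{i}'(S^{*})$ in row $i$ vanishes, so rows $2,3,\dots,n$ each contain a single non-zero entry, located on the diagonal. Expanding the characteristic polynomial by cofactors along these rows immediately peels off the eigenvalues $f_{i}(S^{*})-D_{i}$ for $i=2,\dots,n-1$ and $f_{n}(S^{*})-\widetilde{D}_{n}$, each strictly negative by the previous paragraph.

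The remaining factor is the characteristic polynomial of the $2\times 2$ block coupling $x_{1}$ and $S$,
$$M=\begin{bmatrix} 0 & x_{1}^{*}f_{1}'(S^{*}) \\ -f_{1}(S^{*})/\gamma_{1} & -D-x_{1}^{*}f_{1}'(S^{*})/\gamma_{1}\end{bmatrix}.$$
A trace--determinant argument finishes the job: $\operatorname{tr}(M)<0$ and $\det(M)=x_{1}^{*}f_{1}'(S^{*})f_{1}(S^{*})/\gamma_{1}>0$ (using positivity of $x_{1}^{*}$ and monotonicity of $f_{1}$), so both its eigenvalues have negative real part. Combined with the strictly negative eigenvalues stripped off above, this shows $E^{x_{1}}$ is a local sink, and the reduction to the classical chemostat delivers the competitive-exclusion claim.

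I do not anticipate a substantive obstacle; the proof is essentially bookkeeping. The one subtle point is justifying the block decomposition of the Jacobian: one must explicitly observe that the boundary nature of $E^{x_{1}}$ (namely $x_{i}^{*}=0$ for $i\ge 2$) is precisely what zeros out the $S$-column in rows $2,\dots,n$, since if any $x_{i}^{*}$ were non-zero the decomposition would fail and a more delicate Routh--Hurwitz analysis would be required.
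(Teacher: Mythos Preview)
Your proposal is correct and follows essentially the same approach as the paper: both observe that when $p=1$ the term $(1-q)kx_{n}$ can be absorbed into the removal rate, reducing the system to the classical chemostat, and then invoke the break-even (Hsu--Hubbell--Waltman) competitive-exclusion principle. The only difference is that you additionally carry out an explicit Jacobian computation at $E^{x_{1}}$ to verify the local-sink property, whereas the paper simply cites the classical global-convergence result for this conclusion.
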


\begin{proof}
While proved fully in \cite{rapaport2017new} we will discuss the key points of the proof.

When the power $p=1$ the system reduces to 
\begin{align}
\frac{dx_{i}}{dt}&= x_{i}(f_{i}(S)-D_{i}),  \\
\frac{dx_{n}}{dt}&= x_{n}(f_{n}(S)-qD_{n}-(1-q)k), \text{ and,}  \\
\frac{dS}{dt}&= D(S^{0}-S) - \sum^{n}_{i=1} \frac{1}{\gamma_{i}} f_{i}(S)x_{i}, \ i=1,2...,n-1, 
\end{align}

Without loss of generality, relabel $-qD_{n}-(1-q)k$ to some constant $D_{n}$, it now has the form of the other $x_{i}$ equations.
With the assumptions that $f_{i}(S)$ has the qualities of being one-time differentiable (in $C^{1}$), $f_{i}(0)=0$, and $f'_{i}(S) > 0$ for $S>0$ (increasing).
When these assumptions are met each $x_{i}$ has a ``break-even point" $\Gamma_{i}$ which is defined as the unique solution to $f_{i}(\Gamma_{i})=D_{i}$. 
Without loss of generality let $\Gamma_{1} < \Gamma_{2} < ... < \Gamma_{n}$, with this we can show that each $x_{i}, i>1$ will converge to zero and only the species with the smallest break-even point will survive and then converge to the requisite equilibrium.

\end{proof}

\begin{remark}
    It is also known that in the biologically irrelevant case where some of the break-even points are equal then the species whose $\Gamma_{i}$ value are smallest will be the surviving species \cite{rapaport2017new}.
\end{remark}

\begin{lemma}
    
The equilibrium point $E^{x_n}=(0,0,...,x_n^{*},S^{*})$ is a local sink equilibrium when,

$$(f_n-qD_n)(1-p)-\frac{f_n}{\gamma_{n}}  + \frac{a_n m_n}{\gamma_n (S^{*}+a_n)^{2}}  x_{n}^{*} < D$$ 
and
$$D < (\frac{a_n m_n}{\gamma_n (S^{*}+a_n)^{2}}  x_{n}^{*}) + (\frac{f_n}{\gamma_{n}}(f_n-qD_n)(1-p))(x_n^* \,{\left(\frac{m_n }{S^*+a_n }-\frac{S^*\,m_n }{{{\left(S^*+a_n \right)}}^2 }\right)})$$
\end{lemma}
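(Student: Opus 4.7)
The plan is to mirror exactly the computation carried out in the 2-species version of this lemma, since the Jacobian at $E^{x_n}=(0,\dots,0,x_n^{*},S^{*})$ inherits the same block structure once one observes that $x_i^{*}=0$ for all $i<n$.

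First, I would linearize the system \eqref{eq:chemostat2-1}--\eqref{eq:chemostat2-3} at $E^{x_n}$. Every partial derivative appearing in the row corresponding to $\dot{x}_i$ with $i<n$ carries an explicit factor of $x_i$ (which vanishes at this equilibrium) except for the diagonal entry, which reduces to $f_i(S^{*})-D_i$. Consequently the $(n+1)\times(n+1)$ Jacobian has the block lower-triangular form
\[
J(E^{x_n})=\begin{pmatrix} \mathrm{diag}(f_i(S^{*})-D_i)_{i<n} & 0 \\ \ast & B \end{pmatrix},
\]
where $B$ is the $2\times 2$ block in the $(x_n,S)$ coordinates that is structurally identical to the block already analyzed in the 2-species lemma, with the subscript $2$ replaced by $n$ throughout. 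Because the matrix is block lower-triangular, its spectrum is the union of the spectra of the two diagonal blocks, so the off-diagonal couplings $-f_i/\gamma_i$ appearing in the $\ast$ position are irrelevant for the eigenvalues.

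Next, I would adopt as a standing hypothesis that $f_i(S^{*})-D_i<0$ for each $i<n$; this is the familiar invasion/``break-even'' condition stating that no other competitor can grow at the nutrient level $S^{*}$ maintained by $x_n$, and it was used implicitly in the 2-species case. The remaining two eigenvalues come from $B$, and I would apply the trace--determinant criterion exactly as before: requiring $\mathrm{tr}\,B<0$ yields the first displayed inequality, and requiring $\det B>0$ yields the second. Since the algebraic manipulation is verbatim the one performed in the proof of the 2-species lemma, I would cite it rather than repeat it.

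The main obstacle is bookkeeping rather than analysis: one must use the nullcline identity $f_n(S^{*})-qD_n=(1-q)k(x_n^{*})^{p-1}$ to reduce the $(n,n)$-entry of the Jacobian to $(1-p)(f_n-qD_n)$, and one should emphasize that the negativity of the $n-1$ invasion eigenvalues is a genuine additional stability requirement, not an automatic consequence of $E^{x_n}$ being an equilibrium. Modulo these remarks, the proof reduces cleanly to the 2-species case.
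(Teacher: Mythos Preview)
Your approach is essentially identical to the paper's: compute the Jacobian at $E^{x_n}$, exploit the block lower-triangular structure to peel off the $n-1$ eigenvalues $f_i(S^{*})-D_i$, and then apply the trace--determinant criterion to the remaining $2\times2$ block in the $(x_n,S)$ variables, exactly as in the two-species lemma. The one substantive difference is your treatment of the invasion eigenvalues: the paper asserts that $f_i(S^{*})-D_i\le 0$ holds ``by construction'' (claiming that otherwise $\dot{x}_i>0$ and $E^{x_n}$ would not be an equilibrium), whereas you correctly insist this is a separate hypothesis---and you are right, since $x_i^{*}=0$ forces $\dot{x}_i=0$ at $E^{x_n}$ irrespective of the sign of $f_i(S^{*})-D_i$.
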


\begin{proof}

We will again begin by looking at the Jacobian with the equilibrium plugged in and then looking the eigenvalues of the system.

\begin{equation*}
J(E^{x_n})=\begin{bmatrix}
f_1 - D_{1} & 0 & \dots  & 0 & 0 \\

0 & f_2-D_2 & \ddots & \vdots & 0\\

\vdots & \vdots & \ddots & \vdots & \vdots\\

0 & 0 & \dots & (f_n-qD_n)(1-p) & x_n^* \,{\left(\frac{m_n }{S^*+a_n }-\frac{S^*\,m_n }{{{\left(S^*+a_n \right)}}^2 }\right)}\\

-\frac{f_1 }{\gamma_{1}}   & -\frac{f_2 }{\gamma_{2}}  & \dots & -\frac{f_n}{\gamma_{n}} & -D + \frac{a_n m_n}{\gamma_n (S^{*}+a_n)^{2}}  x_{n}^{*}
\end{bmatrix}
\end{equation*}

Characteristic polynomial:

$$\rho(\lambda) = (f_1-D_1-\lambda)(f_2-D_2-\lambda)...(f_{n-1}-D_{n-1}-\lambda) \det \left( A\right)$$
with $$A = \begin{bmatrix} (f_n-qD_n)(1-p) - \lambda & x_n^* \,{\left(\frac{m_n }{S^*+a_n }-\frac{S^*\,m_n }{{{\left(S^*+a_n \right)}}^2 }\right)}\\
-\frac{f_n}{\gamma_{n}} & -D + \frac{a_n m_n}{\gamma_n (S^{*}+a_n)^{2}}  x_{n}^{*} - \lambda
\end{bmatrix}$$

Looking at the eigenvalues for this system, the first $n-1$ are of the form $\lambda_i=f_i-D_i$, which by construction must be less than or equal to zero. 
In the case where one or more of them are positive, then the differentials of that $x_i$ equation would be positive and have growth thus $E^{x_{n}}$ would not be an equilibrium point.

The remaining two eigenvalues are determined from the bottom right 2x2 matrix, and to achieve the condition of local sink, we need both eigenvalues to be negative.

When dealing with the final 2x2 matrix we can determine the signs of the eigenvalues using the trace and determinant. 
The determinant is product of the eigenvalues and when positive either means that both eigenvalues are positive or both are negative.
The trace is the sum of the eigenvalues, if the trace is negative coupled with the positive determinant implies that both eigenvalues are negative as desired.

This trace condition holds when:

$$ (f_n-qD_n)(1-p)-\frac{f_n}{\gamma_{n}} -D + \frac{a_n m_n}{\gamma_n (S^{*}+a_n)^{2}}  x_{n}^{*}<0$$

$$ (f_n-qD_n)(1-p)-\frac{f_n}{\gamma_{n}}  + \frac{a_n m_n}{\gamma_n (S^{*}+a_n)^{2}}  x_{n}^{*}<D$$

The determinant condition holds when:

$$(f_n-qD_n)(1-p)(-D + \frac{a_n m_n}{\gamma_n (S^{*}+a_n)^{2}}  x_{n}^{*}) - (-\frac{f_n}{\gamma_{n}})(x_n^* \,{\left(\frac{m_n }{S^*+a_n }-\frac{S^*\,m_n }{{{\left(S^*+a_n \right)}}^2 }\right)}) > 0$$

$$(\frac{a_n m_n}{\gamma_n (S^{*}+a_n)^{2}}  x_{n}^{*}) + (\frac{f_n}{\gamma_{n}}(f_n-qD_n)(1-p))(x_n^* \,{\left(\frac{m_n }{S^*+a_n }-\frac{S^*\,m_n }{{{\left(S^*+a_n \right)}}^2 }\right)}) > D$$
  
\end{proof}

Thus, by fixing all other parameter values we can some amount of command on convergence to this equilibrium point through the controllable resource dilution rate. 
Later on we will showcase how shifting of this parameter can widen or narrow the field of initial conditions that will converge to this $E^{x_{n}}$.

\begin{corollary}
The region of local stability is not impacted by the values of $\gamma_{i}$ $i=\{1,...,n-1\}$, only the speed in convergence. 
\end{corollary}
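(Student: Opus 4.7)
The proof plan is to inspect the Jacobian $J(E^{x_n})$ derived in the preceding lemma and exploit its block-triangular structure. If I partition the coordinates as $(x_1,\ldots,x_{n-1}\mid x_n,S)$, the first $n-1$ rows of $J(E^{x_n})$ have zeros in the last two columns and carry the diagonal entries $f_i-D_i$. So I would write
\[
J(E^{x_n}) \;=\; \begin{bmatrix} A & 0 \\ C & B \end{bmatrix},
\]
with $A=\mathrm{diag}(f_1-D_1,\ldots,f_{n-1}-D_{n-1})$, $B$ the $2\times 2$ block whose trace and determinant were analyzed in the previous lemma, and $C$ the block containing the off-diagonal entries $-f_i/\gamma_i$ in the $S$-row (the $x_n$-row of $C$ being identically zero).

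Next I would apply the standard fact that for a block lower-triangular matrix the spectrum is $\sigma(A)\cup\sigma(B)$. Reading off $A$, $B$, and $C$, the parameters $\gamma_1,\ldots,\gamma_{n-1}$ appear only inside $C$ and not in $A$ or $B$ (the latter involving only $\gamma_n$). Hence neither the eigenvalues $f_i-D_i$ nor the pair emerging from $B$ depend on the yields of the extinct species. The trace and determinant inequalities from the previous lemma, which fully characterize local asymptotic stability, therefore do not involve $\gamma_i$ for $i\in\{1,\ldots,n-1\}$, giving the first half of the corollary essentially by inspection.

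For the ``speed'' clause I would argue that $\gamma_1,\ldots,\gamma_{n-1}$ still enter the full eigenvectors of $J(E^{x_n})$ through $C$: for an eigenvalue $\lambda_i=f_i-D_i$ of $A$, the associated eigenvector of the whole Jacobian has bottom component $(\lambda_i I - B)^{-1} C e_i$, which scales like $1/\gamma_i$. Thus the yields of the subdominant species control the projection of initial data onto each eigendirection and the coupling of the extinction modes into the nutrient dynamics, shaping the quantitative transient profile without moving the stability boundary. The main obstacle I anticipate is conceptual rather than technical, namely interpreting ``speed of convergence'' consistently with the linear analysis, since the asymptotic exponential decay rates are themselves $\gamma_i$-independent; the honest reading is that the spectrum and hence the local stability region are insensitive to $\gamma_i$ for $i<n$, while the eigenvector directions, and therefore the transient amplitudes, are not.
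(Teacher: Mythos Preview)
Your argument is correct and considerably more rigorous than what the paper actually offers. The paper's ``proof'' is a two-sentence heuristic: decreasing $\gamma_i$ for $i<n$ raises the consumption rate of the substrate, drives $S$ down faster, and hence accelerates extinction of the subdominant species without changing who wins. It does not compute eigenvalues, does not invoke the block structure of $J(E^{x_n})$, and does not explicitly verify that the trace/determinant conditions of the preceding lemma are free of $\gamma_1,\ldots,\gamma_{n-1}$.

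Your approach is the right way to make the statement precise. The block lower-triangular decomposition you describe is exactly what the Jacobian in the preceding lemma exhibits, and your observation that $\gamma_1,\ldots,\gamma_{n-1}$ live only in the off-diagonal block $C$ (while both the equilibrium values $x_n^*,S^*$ and the $2\times 2$ block $B$ depend only on $\gamma_n$) is the actual content behind the corollary. Your caveat about ``speed of convergence'' is also well taken: since the spectrum itself is $\gamma_i$-independent for $i<n$, the honest statement is that these yields affect eigenvectors and transient amplitudes, not asymptotic rates. The paper's intuitive phrasing glosses over this distinction, so your reading is both compatible with and sharper than the original.
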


\begin{proof}
    Intuitively by decreasing the values of $\gamma_{i}$ $i=\{1,...,n-1\}$ the consumption rate of the substrate is increasing and will drive the substrate towards zero faster. 
    This will speed up the results as if the conditions of local stability are met then $x_{n}$ has the lowest break-even point and will be the first species that can be support in the system and drive the others to extinction.
\end{proof}

Using the above results, we can state.

\begin{theorem}
Consider equations ~\ref{eq:chemostat2-1}, \ref{eq:chemostat2-2}, and \ref{eq:chemostat2-3}.
There exists parameters such that for $p=1$, $x_{1}$ competitively excludes $x_{2},x_{3},...,x_{n}$. 
For the same choice of parameters, and suitably chosen $p \in (0,1)$, $x_{n}$ is instead able to competitively exclude all other competitors, for certain initial conditions.
\end{theorem}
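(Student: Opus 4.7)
The plan is to treat the theorem as a two-stage construction in which all parameters except $p$ are fixed once and for all, and only the power $p$ is varied between the two regimes. I would first select parameters achieving $x_1$-exclusion at $p = 1$, and then show that the \emph{same} parameters, together with a suitably chosen $p \in (0,1)$, turn $E^{x_n}$ into a hyperbolic sink whose basin of attraction is a nonempty open set, giving $x_n$-exclusion for initial data drawn from that basin.

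For the first part, I invoke the preceding lemma on the $p=1$ case directly. When $p = 1$, each species has a well-defined break-even point $\Gamma_i$, and the species with the smallest $\Gamma_i$ competitively excludes the others. I therefore pick the parameters $m_i, a_i, D_i, \gamma_i, q, k$ so that the break-even point of $x_1$ is strictly smaller than those of $x_2, \ldots, x_{n-1}$, and strictly smaller than the modified break-even point $\Gamma_n$ determined by $f_n(\Gamma_n) = qD_n + (1-q)k$. Since $\Gamma_i$ depends continuously on the parameters, there is ample freedom to ensure the strict ordering $\Gamma_1 < \Gamma_2 < \cdots < \Gamma_n$ with all gaps uniform.

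For the second part, I hold these parameters fixed and lower $p$ into $(0,1)$. I then verify the two hypotheses of the earlier lemma characterizing $E^{x_n}$ as a local sink. First, I confirm that a positive boundary equilibrium $E^{x_n} = (0, \ldots, 0, x_n^*, S^*)$ exists, using the nullcline condition $f_n(S^*) - qD_n - (1-q)k(x_n^*)^{p-1} = 0$ together with the substrate balance. Second, the upper-triangular eigenvalues $\lambda_i = f_i(S^*) - D_i$ for $i = 1, \ldots, n-1$ are negative because the ordering of break-even points from stage one forces $S^* < \Gamma_i$ for $i < n$. It then remains to satisfy the trace and determinant inequalities of the lemma, which govern the bottom-right $2\times 2$ block. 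Here I exploit the new freedom provided by the factor $(1-p)$: as $p$ decreases in $(0,1)$, the diagonal term $(f_n - qD_n)(1-p)$ becomes controllable, and the two inequalities carve out a nonempty interval of admissible dilution rates $D$. Choosing $p$ so that the intersection of the two inequalities is nonempty (treating $p$ essentially as a bifurcation parameter that opens this interval from empty at $p=1$ to nontrivial for $p$ sufficiently below $1$) establishes $E^{x_n}$ as a hyperbolic sink. By standard ODE theory its basin of attraction is an open set in the positive orthant, and any initial condition within this basin gives a trajectory along which $x_1, \ldots, x_{n-1} \to 0$ and $x_n \to x_n^*$.

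The principal obstacle is ensuring the \emph{simultaneous} satisfiability of the trace and determinant conditions for $E^{x_n}$ while the $p = 1$ exclusion of $x_1$ is preserved under the same parameter choice. The reason this is delicate is that the two regimes place opposing demands on the ranking of break-even points versus the stability block for $x_n$. My approach is to use $p$ as the only varying parameter: at $p=1$ the lemma's inequalities collapse and $E^{x_n}$ is expelled from the admissible region, whereas for $p$ strictly less than $1$ the factor $(1-p)(1-q)k(x_n^*)^{p-1}$ on the diagonal grows as $p$ decreases (since $p-1 < 0$ and $x_n^*$ stays bounded away from infinity), which drives the trace condition into the stable regime while the determinant condition remains satisfied by continuity. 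A brief continuity and monotonicity argument, rather than a full algebraic manipulation, then produces the required $p \in (0,1)$, completing the proof.
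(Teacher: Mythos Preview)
Your overall strategy matches the paper's: the theorem is stated there immediately after the $p=1$ lemma and the $E^{x_n}$ sink lemma with the phrase ``Using the above results, we can state,'' and no further argument is given beyond the subsequent numerical simulations. So your plan of combining those two lemmas is exactly what the paper does, only spelled out in more detail.

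There is, however, a genuine gap in your sketch. You assert that ``the ordering of break-even points from stage one forces $S^* < \Gamma_i$ for $i < n$,'' and hence that the eigenvalues $\lambda_i = f_i(S^*) - D_i$ are negative. This does not follow. At stage one you arranged $\Gamma_1 < \Gamma_2 < \cdots < \Gamma_n$, so $\Gamma_1$ is the \emph{smallest} of the $\Gamma_i$ for $i<n$; to get $\lambda_i<0$ for every $i<n$ you need $S^* < \Gamma_1$. But $S^*$ is determined by the $x_n$ nullcline $f_n(S^*) = qD_n + (1-q)k(x_n^*)^{p-1}$ together with the substrate balance, and nothing in the ordering of the $\Gamma_i$ at $p=1$ constrains where this $S^*$ sits relative to $\Gamma_1$. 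In fact, for $x_n^*>0$ the nullcline forces $f_n(S^*) > qD_n$, which only bounds $S^*$ from below, not from above. You therefore need an explicit argument (or a concrete parameter choice, as the paper in effect supplies numerically) showing that $p$, $q$, $k$, $D$, $S^0$, and $\gamma_n$ can be tuned so that the resulting $S^*$ lies strictly below $\Gamma_1$ while the trace/determinant conditions also hold. The paper's own lemma glosses over exactly this point with a ``by construction'' remark that is itself not justified (since $x_i=0$ makes $\dot{x}_i=0$ regardless of the sign of $f_i(S^*)-D_i$), so be aware that filling this gap requires more than what either you or the paper have written.
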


\subsection{Numerical Results}
In this section, we examine the numerical results of the new Chemostat system of equations under various parameters and initial conditions.

\begin{figure}[H]
\begin{subfigure}[b]{.475\linewidth}
    \includegraphics[width=3.0in]{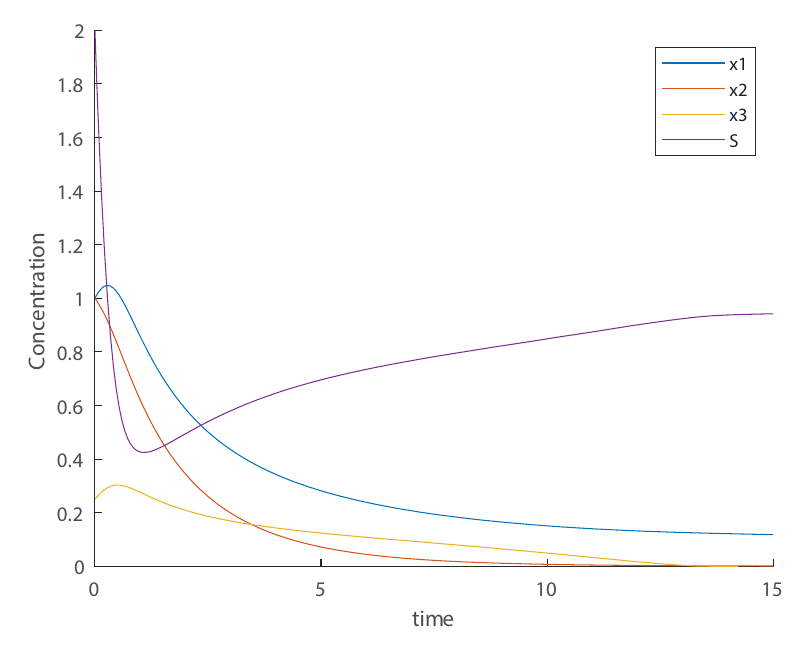}
    \caption{Initial conditions with $x_{3}(0)=.25$}
    \label{subfig:x3lose}
\end{subfigure}
\hfill
\begin{subfigure}[b]{.475\linewidth}
    \includegraphics[width=3.0in]{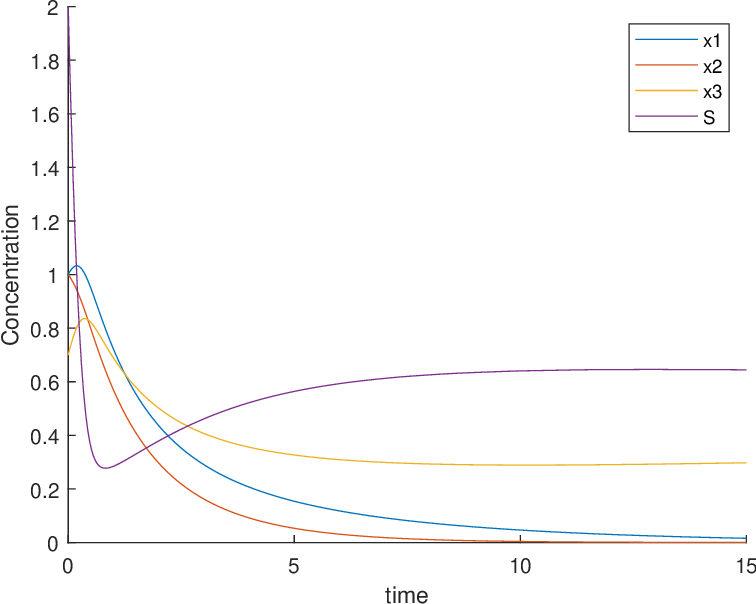}
    \caption{Initial conditions with $x_{3}(0)=1$}
    \label{subfig:x3wins}
\end{subfigure}
\caption{Numerical simulation of where the only change is in terms of initial conditions of $x_3$. Parameters $a_1=1, a_2=1, a_3=1, m_1=2, m_2=1, m_3=3, D_1=1, D_2=.9, D_3=.8, k=.2, p=.5, D=2, S^{0}=1, \gamma_1=1, \gamma_2=1, \gamma_3=.5$, with initial conditions $x_1(0)=1, x_2(0)=1, S(0)=2$.}
\label{fig:pde_ce_sim3}
\end{figure}

We can see that with constant parameter values with $p\neq 1$, that by increasing the initial amount of $x_{3}$ we can go from $x_{1}$ competitively excluding $x_{2}$ and $x_{3}$ to the reversal where $x_{3}$ is the sole survivor.

\begin{figure}[H]
\begin{subfigure}[b]{.475\linewidth}
    \includegraphics[width=3.0in]{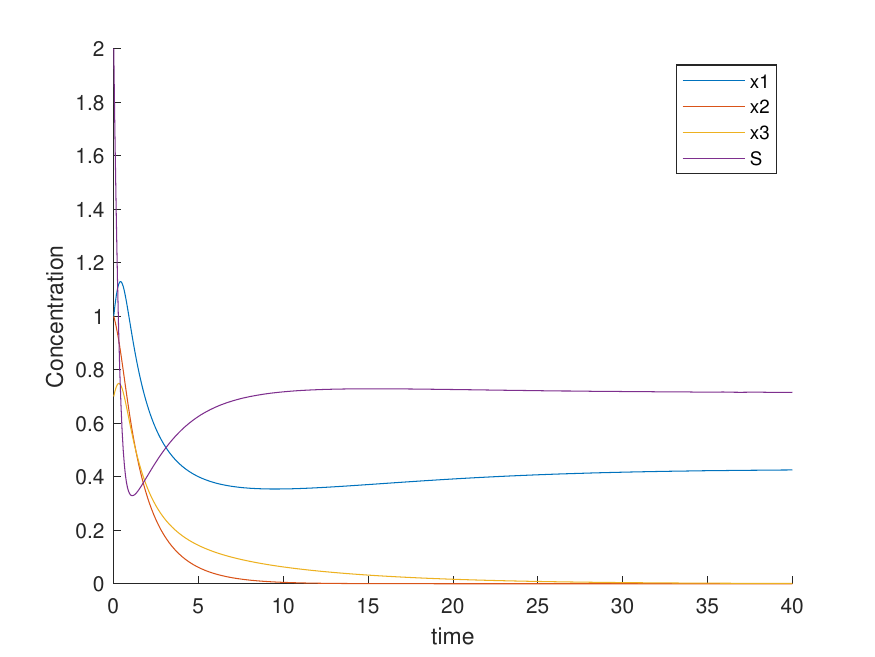}
    \caption{q=1}
    \label{subfig:q1}
\end{subfigure}
\hfill
\begin{subfigure}[b]{.475\linewidth}
    \includegraphics[width=3.0in]{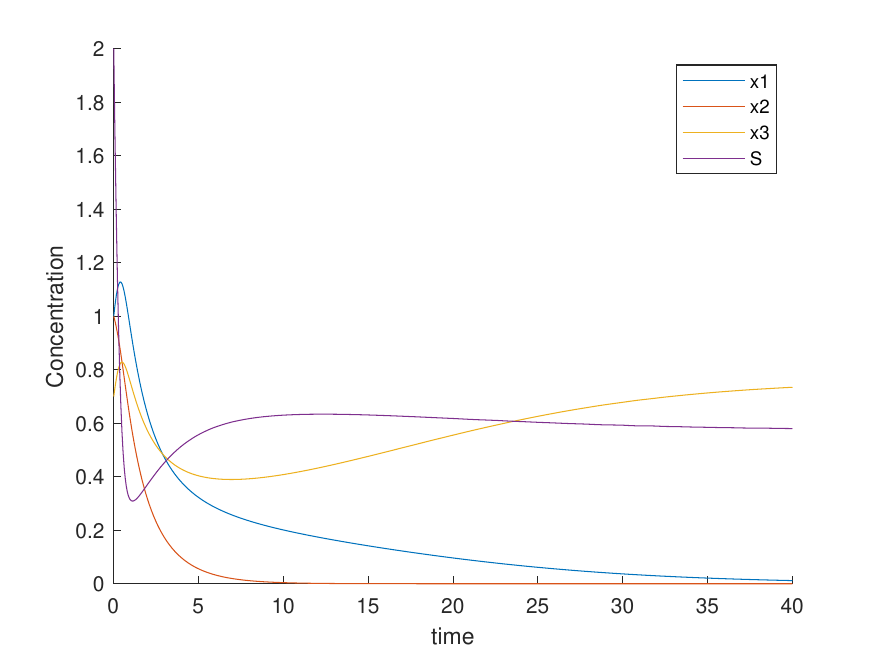}
    \caption{q=.7}
    \label{subfig:q.7}
\end{subfigure}
\caption{Numerical simulation of where the only change is in terms of q parameter. Parameters $a_1=1, a_2=1, a_3=1, m_1=2.4, m_2=1, m_3=2.3, D_1=1, D_2=.9, D_3=1.1, k=.2, p=.5, D=1.5, S^{0}=1, \gamma_1=1, \gamma_2=1, \gamma_3=.5$, with initial conditions $x_1(0)=1, x_2(0)=1, x_3(0)=.7, S(0)=2$.}
\end{figure}

\begin{figure}[H]
\begin{subfigure}[b]{.475\linewidth}
    \includegraphics[width=3.0in]{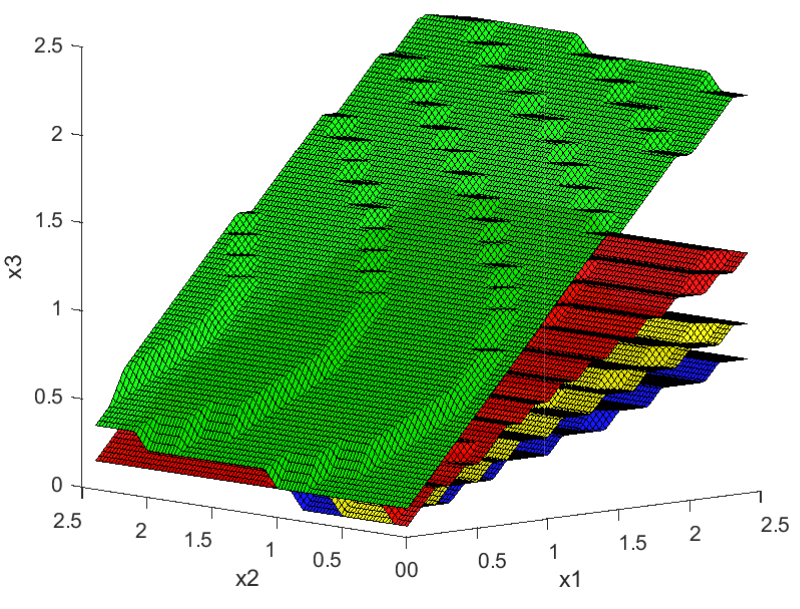}
    \label{ChangeD}
\end{subfigure}
\hfill
\begin{subfigure}[b]{.475\linewidth}
    \includegraphics[width=3.0in]{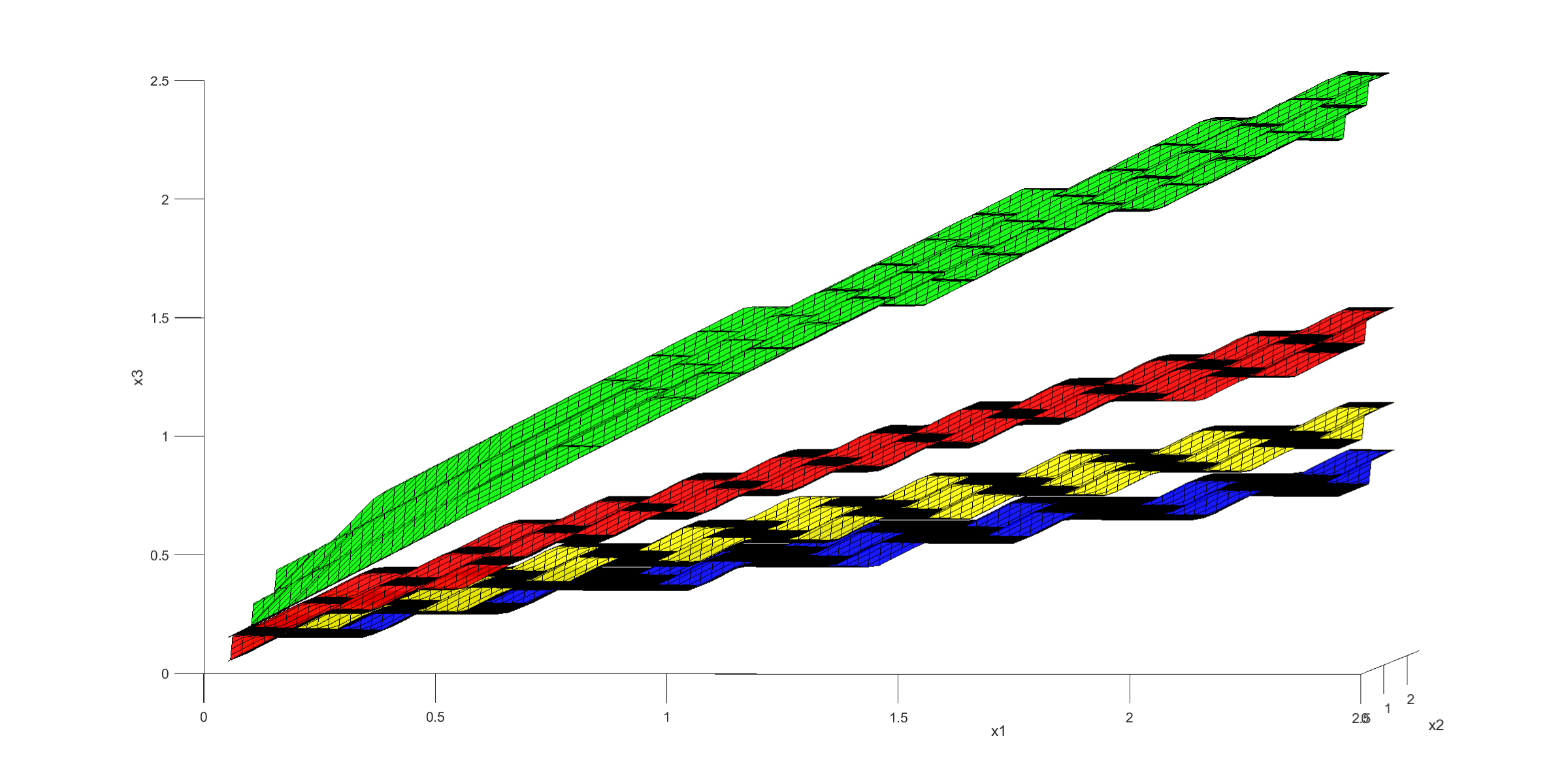}
    \label{Dside}
\end{subfigure}
\caption{Numerical simulation of changing D parameter separatrix from two angles. Parameters $a_1=1, a_2=1, a_3=1, m_1=2.4, m_2=1, m_3=2.3, D_1=1, D_2=.9, D_3=1.1, k=.2, p=.5, S^{0}=1, \gamma_1=1, \gamma_2=1, \gamma_3=.5, S(0)=2$.}
\end{figure}

The figure's colored planes separate the regions of initial conditions where species $x_{1}$ wins (below the planes) and $x_{3}$ wins (above the planes) for several different valuations of $D$. 
The colors of planes correspond to $D=1$ for blue, $D=1.5$ for yellow, $D=2$ for red, and $D=2.5$ for green. 
Recall that $D_{i}=D+$ some death rate for each species, and as D reaches 3 in this simulation then all species die at a rate faster than reproduction and only the substrate will remain. 
The main take-away is that the human controlled dilution rate can directly impacted the percentage of initial conditions that converge to either $x_{1}$ or $x_{3}$.
    \section{Conclusion}

In this paper we discussed the dynamics of a modified extraction chemostat model. 
On the classical model all species  are competing over a finite resource for survival which leads to one species competitively excluding the remaining species \cite{hardin1960competitive} \cite{liu2014competitive}. 
Our model differs from the classical in that a percentage of the weakest/least-fit species was harvested from the environment, while one might believe that this removal will only hurt the species, it in fact led to local convergence where certain initial conditions sees the weaker species out compete its stronger competitors.
The conditions that allowed for this type of dynamics were quantified and shown through several numerical simulations.
We derived an extended Lotka-Volterra competition model from our modified chemostat model that was two-dimensional, and it allowed us to study the system dynamics in the phase plane using nullclines.
We showed that the size of the region of local stability could be increased or decreased by changing the human controlled parameter in the model. 

Several questions remain open at this juncture. One can investigate the spatially explicit case similarly, where the harvesting would be modeled into the boundary conditions. Most works on spatially explicit chemostat models assume equal diffusivity of the species \cite{shi2020coexistence}. Removing this assumption in lieu of the harvesting could enable interesting dynamics. Also, the spatially explicit/PDE case has several connections to surface chemical reaction network theory \cite{clamons2020programming}, which are being currently explored. Also, one can look at the ODE patch case (two patch and multiple patch) with the harvesting occurring in only select patches. We conjecture one might be able to extract similar dynamics for a ``minimal" patch size or number, but this remains to be proven rigorously or perhaps simulated. 

Lastly, the results derived herein would also be interesting to experimentalists and practitioners, who aim to engineer alternative states, via turbidostat type systems, for various biological/chemical processes. The current work provides a theoretical backdrop for such experiments with density dependent harvesting.

    \noindent {\bf Acknowledgments.}
    This work was funded in part by National Science Foundation grant 1900716.

    \bibliography{chemo}
\end{document}